\newtheorem{theorem}{Theorem}[section]
\newtheorem{claim}[theorem]{Claim}
\theoremstyle{definition}
\newcommand{\Pic}{{\rm Pic}}
\newcommand{\p}{{\mathbb P}}
\def\leq{\leqslant}
\begin{document}
 
\title[Footnotes to a footnote of a paper by Fano]{Footnotes to a footnote of a paper by Fano}

\author{Ciro Ciliberto}
\address{Dipartimento di Matematica, Universit\`a di Roma Tor Vergata, Via O. Raimondo 00173, Roma, Italia}
\email{cilibert@mat.uniroma2.it}

\subjclass{Primary ; Secondary }
 
\keywords{Fano varieties, weighted projective spaces}
 
\maketitle

\medskip

\begin{abstract} Following a suggestion in a footnote of the paper \cite {Fa} by Fano, we give a direct proof of the fact that  one of the two Fano threefolds of maximal degree 72 in $\p^{38}$ is isomorphic to the anticanonically embedded weighted projective space $\p(1,1,4,6)$.  \end{abstract}

\section*{Introduction} 

In the paper \cite {Fa} Fano claimed that anticanonically embedded Fano threefolds have degree at most  72, and that there are only two such threefolds of degree 72, that he fully described in a geometric way. The proof of Fano's theorem is certainly not sufficient from the viewpoint of the modern standards of rigor, and a modern and fully rigorous proof has been given by Prokhorov in 
\cite {Pro1}. Prokhorov proved in \cite {Pro1} that if $X$ is a Fano threefold with canonical Gorenstein singularities, then $-K_X^3\leq 72$ and the equality is attained if and only if $X$ is either the weighted projective space $\p(1,1,1,3)$ or the weighted projective space $\p(1,1,4,6)$.  Prokhorov  identifies the two Fano varieties  $X$ such that $-K_X^3=72$. One of them is easily seen to be $\p(1,1,1,3)$ anticanonically embedded. As for the other one,  it is obtained from a very nice geometric construction by Fano (recalled here in \S \ref  {sec:FC1}). To prove that this variety coincides with $\p(1,1,4,6)$ Prokhorov basically argues as follows. The weighted projective space $\p:=\p(1,1,4,6)$ has $|-K_\p|$ very ample, has  $-K_{\p}^3=72$ and $\dim (|-K_{\p}|)=38$, so $\p$ anticanonically embeds in $\p^{38}$ as a threefold of degree 72. By exclusion, the threefold constructed by Fano  must be $\p(1,1,4,6)$. This argument is perfectly all right, however it would be desirable to have also a somewhat explicit proof that the variety arising from Fano's contruction coincides with $\p(1,1,4,6)$ anticanonically embedded. This is what we do in this note (see Theorem \ref {thm:main}). Our argument is contained in \S \ref {sec:proof} and it is based on giving explicit parametric equations for the variety arising from Fano construction. In doing so, we follow a suggestion given by Fano in a footnote of \cite {Fa}.

\medskip
{\bf Acknowledgements:} The author is a member of GNSAGA of the Istituto Nazionale di Alta Matematica ``F. Severi''. The author wants to thank Yuri Prokhorov for  useful e--mail exchanges of ideas on the topic of this note. 

\section{The weighted projective space $\p(1,1,4,6)$} \label{sec:FC}

We consider the weighted projective space $\p(1,1,4,6)$ and we will denote it by $\p:=\p(1,1,4,6)$. It is well known (see \cite{F}) that:\\
\begin{inparaenum}
\item [(i)] $-K_{\p}=\mathcal O_\p(12)$ is a very ample invertible sheaf;\\
\item [(ii)] $\Pic(\p)=\mathbb Z[\mathcal O_\p(12)]$;\\
\item [(iii)] $-K_{\p}^3=72$;\\
\item [(iv)] $\dim (|-K_{\p}|)=38$;\\
\item [(v)] the anticanonical image of $\p$ (of degree 72 in $\p^{38}$) is a Fano threefold.
\end{inparaenum}

\section{Fano's construction} \label{sec:FC1}

Let us recall Fano's construction  from \cite [\S 12, f)]{Fa} (see also \cite [Chapt. IV, 4.2(ii)]{Isk}). Consider the minimal rational surface $\mathbb F_4$,  where we denote by $E$ the negative section with $E^2=-4$ and by $F$ 
the ruling with $F\cdot E=1$ and $F^2=0$. The linear system $|E+6F|$ is very ample on $\mathbb F_4$ and determines an embedding of $\mathbb F_4$ in $\p^9$ whose image is a scroll surface $Y$ of minimal degree 8. The section $E$ is mapped to a conic and the rulings $F$ to lines. Note that $Y$ is the isomorphic image of
$$
\p(\mathcal O_{\p^1} (2)\oplus \mathcal O_{\p^1} (6))
$$
via the morphism determined by its (very ample) tautological line bundle. 

Let $X$ be the cone over $Y$ in $\p^{10}$ with vertex a point $p$. Then $X$ is the image of 
$$
\widetilde X:=\p(\mathcal O_{\p^1}\oplus \mathcal O_{\p^1} (2)\oplus \mathcal O_{\p^1} (6))
$$
via the morphism determined by its (base point free)  tautological line bundle $L$. The smooth threefold $\widetilde X$ is a $\p^2$--bundle over $\p^1$, whose general fibre we will denote by $P$. 
The planes $P$ are mapped to the joins of the vertex $p$ with the lines of the ruling of the scroll $Y$. 


What Fano does in \cite [\S 12, f)]{Fa} is to consider the linear system $\mathcal L$ of Weil divisors on $X$ cut out by the cubic hypersurfaces containing 6 planes of the ruling of $X$. Then he considers the rational map $\phi_\mathcal L: X\dasharrow Z\subset \p^{n}$, where $n=\dim(\mathcal L)$, determined by the linear system $\mathcal L$, whose image we denote by $Z$.

Of course this is essentially equivalent to consider on $\widetilde X$ the linear system $\widetilde {\mathcal L}:=|3L-6P|$ and the map $\phi_{\widetilde {\mathcal L}}: \widetilde X\dasharrow Z\subset \p^{n}$. It is an easy exercise to compute $n=\dim(\widetilde {\mathcal L})=\dim(\mathcal L)=38$. 

The main result in this note is the following:

\begin{theorem}\label{thm:main} The image $Z$ of the map $\phi_\mathcal L: X\dasharrow Z\subset \p^{38}$ coincides with the anticanonical image of $\p(1,1,4,6)$.
\end{theorem}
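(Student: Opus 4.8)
The plan is to follow the footnote literally and compute \emph{both} rational maps to $\p^{38}$ in fully explicit coordinates, then observe that they are given by the very same $39$ coordinate functions. First I would work on $\widetilde X$ (which the excerpt shows is equivalent to working on $X$) and choose affine coordinates: set $\lambda=t/s$ on the base $\p^1=\operatorname{Proj}\C[s,t]$, and on the open set where the coordinate $x_0$ attached to the summand $\O_{\p^1}$ is nonzero put $\eta_1=x_1/x_0$ and $\eta_2=x_2/x_0$, the relative coordinates of the summands $\O_{\p^1}(2)$ and $\O_{\p^1}(6)$. Using $\H(\widetilde X,3L-6P)=\H(\p^1,\operatorname{Sym}^3(\O_{\p^1}\oplus\O_{\p^1}(2)\oplus\O_{\p^1}(6))(-6))$, a basis of $\widetilde{\mathcal L}$ is given by the degree-three fibre monomials times base monomials of the complementary degree: namely $x_0^2x_2$, the three $x_0x_1x_2\,s^at^b$ $(a+b=2)$, the seven $x_0x_2^2\,s^at^b$ $(a+b=6)$, $x_1^3$, the five $x_1^2x_2\,s^at^b$ $(a+b=4)$, the nine $x_1x_2^2\,s^at^b$ $(a+b=8)$, and the thirteen $x_2^3\,s^at^b$ $(a+b=12)$, for a total of $39$ (recovering $n=38$). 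On the chart $s=1$, $x_0=1$ these become the $39$ functions $\lambda^b\eta_1^c\eta_2^e$ obtained by $s^at^b\mapsto\lambda^b$.

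Next I would write the anticanonical map of $\p(1,1,4,6)$ in weighted coordinates $u,v,w,z$ of weights $1,1,4,6$: the $39$ monomials of weighted degree $12$, read on the chart $u=1$ with $\mu=v$, are $\mu^b$ $(b\le 12)$, $w\mu^b$ $(b\le 8)$, $w^2\mu^b$ $(b\le 4)$, $w^3$, $z\mu^b$ $(b\le 6)$, $zw\mu^b$ $(b\le 2)$, $z^2$, in groups of cardinality $13,9,5,1,7,3,1$. The key point is then the explicit birational substitution
\[
\mu=\lambda,\qquad w=\eta_1/\eta_2,\qquad z=1/\eta_2,
\]
which is an isomorphism of the two affine charts (its inverse is $\eta_2=1/z$, $\eta_1=w/z$, $\lambda=\mu$). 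Multiplying each pulled-back monomial by the common factor $\eta_2^3$ (harmless projectively), every anticanonical monomial is carried \emph{exactly} onto one of the $39$ sections above: $\mu^b\mapsto\eta_2^3\lambda^b$, $w\mu^b\mapsto\eta_1\eta_2^2\lambda^b$, $w^2\mu^b\mapsto\eta_1^2\eta_2\lambda^b$, $w^3\mapsto\eta_1^3$, $z\mu^b\mapsto\eta_2^2\lambda^b$, $zw\mu^b\mapsto\eta_1\eta_2\lambda^b$, $z^2\mapsto\eta_2$, matching the seven groups and their cardinalities term by term. Thus $\phi_{\widetilde{\mathcal L}}$ and the anticanonical map of $\p(1,1,4,6)$ are, up to a diagonal linear identification of the coordinates, given by the same $39$ functions.

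To finish, since the substitution is invertible the map $\phi_{\widetilde{\mathcal L}}$ is birational onto its image, and since $-K_{\p(1,1,4,6)}=\O(12)$ is very ample the anticanonical map is a closed embedding; hence $Z$ coincides with the anticanonical image of $\p(1,1,4,6)$. The part that requires care is the bookkeeping of the twists, i.e.\ verifying that $\widetilde{\mathcal L}=3L-6P$ produces exactly these $39$ monomials and keeping track of the base locus of $\widetilde{\mathcal L}$ (along $\eta_2=0$ and the negative section), together with the passage from the dense chart to the closures: since both images are irreducible threefolds and the same rational functions separate their points, the identification on the chart forces equality of the closures. I would also record that the constants suppressed in the substitution amount only to a diagonal projective transformation, so that ``coincides'' is to be read up to the natural identification of the two copies of $\p^{38}$.
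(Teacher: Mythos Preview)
Your argument is correct. The cohomology computation $H^0(\widetilde X,3L-6P)\cong H^0\bigl(\p^1,\operatorname{Sym}^3(\O\oplus\O(2)\oplus\O(6))(-6)\bigr)$ splits into the ten line-bundle summands you list, the dimensions add to $39$, and the substitution $\mu=\lambda$, $w=\eta_1/\eta_2$, $z=1/\eta_2$ (clearing the denominator $\eta_2^3$) matches the seven monomial groups bijectively with the seven groups of weight-$12$ monomials on $\p(1,1,4,6)$; since $|-K_\p|$ is very ample and your substitution is birational on dense charts, the closures agree.

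Your route, however, is genuinely different from the paper's. The paper follows Fano's suggestion in a classical projective fashion: it projects the cone $X\subset\p^{10}$ birationally to $\p^3$ from a centre $\langle \ell_1,\ell_2,p_3,p_4,p_5\rangle$ built out of lines and points in five ruling planes, then identifies the pushed-down system $\mathcal S$ of $|\O_X(1)|$ as an explicit $10$-dimensional system of sextic surfaces in $\p^3$ (via three geometric Claims analysing the effect of the projection on each $P_i$), writes the composite system $\mathcal T$ corresponding to $\phi_{\mathcal L}\circ\phi_{\mathcal S}$ as a $38$-dimensional family of degree-$12$ surfaces, and finally exhibits a birational map $\eta:\p^3\dasharrow\p(1,1,4,6)$ under which $\mathcal T$ pulls back from $|\O_\p(12)|$. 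What your approach buys is brevity and transparency: by staying on $\widetilde X$ and exploiting the projective-bundle (essentially toric) coordinates, you avoid the projection to $\p^3$ and the attendant geometric Claims entirely, reducing the theorem to a monomial bookkeeping. What the paper's approach buys is a concrete realisation of Fano's linear system inside ordinary $\p^3$, closer in spirit to Fano's own language, and an explicit polynomial description of the intermediate systems $\mathcal S$ and $\mathcal T$ that may be of independent classical interest.
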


\section{The proof of Theorem \ref {thm:main} } \label{sec:proof}

\begin{proof}[Proof of Theorem \ref {thm:main}] Let $P_1,\ldots, P_5$ be general planes of the ruling of $X$. Fix general lines $\ell_1,\ell_2$ in $P_1,P_2$ respectively and general points $p_i\in P_i$, for $3\leq i\leq 5$. Then project down $\psi: X\dasharrow \p^3$ from $\langle \ell_1, \ell_2, p_3,p_4,p_5\rangle$. It is immediate to see that $\psi: X\dasharrow \p^3$ is a birational map that maps the ruling of planes of $X$ to the pencil $\mathcal P$ of planes in $\p^3$ passing through a given line $r$, and the vertex $p$ of $X$ is mapped to a point $q\in r$. We may choose homogeneous coordinates $[x_1,x_2, x_3,x_4]$ in $\p^3$ so that the images $\alpha_1,\alpha_2$ of the planes $P_1,P_2$ have equations $x_1=0$, $x_2=0$ respectively (hence $r$ has equations $x_1=x_2=0$), and the images $\alpha_3,\alpha_4,\alpha_5$ of the three planes $P_3,P_4,P_5$ have global equation $\xi(x_1,x_2)=0$, where $\xi(x_1,x_2)$ is a suitable homogeneous polynomial of degree 3 in $x_1,x_2$. We may also assume that the point $q$, image of the vertex $p$ of $X$, coincides with the point with coordinates $[0,0,0,1]$.

The linear system $|\mathcal O_X(1)|$ is mapped by $\psi: X\dasharrow \p^3$  to a linear system $\mathcal S$ of dimension 10 of scroll surfaces, that we want to describe. First of all, since a general hyperplane section of $X$ (a scroll surface of degree 8 isomorphic to $Y$) intersects the lines $\ell_1,\ell_2$ at two points, it is clear that the surfaces in $\mathcal S$ have degree 6. Moreover the variable intersection of the general surface in $\mathcal S$ with a plane of the pencil $\mathcal P$ is a line. This implies that the surfaces in $\mathcal S$ have multiplicity 5 along $r$. 

\begin{claim} \label{claim:one} The surfaces in $\mathcal S$ intersect the planes $\alpha_1,\alpha_2$, off $r$, in lines passing through the point $q$.
\end{claim}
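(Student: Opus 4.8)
The plan is to make Fano's construction completely explicit and then read off the behaviour of the surfaces of $\mathcal S$ along $\alpha_1,\alpha_2$ by a direct limit computation. First I would parametrise $X$ by the tautological morphism of $\widetilde X=\p(\mathcal{O}_{\p^1}\oplus\mathcal{O}_{\p^1}(2)\oplus\mathcal{O}_{\p^1}(6))$: on the fibre over $[u_0:u_1]$ with fibre coordinates $[w:v_0:v_1]$ one takes homogeneous coordinates $z_0=w$, $(z_1,z_2,z_3)=(u_0^2,u_0u_1,u_1^2)v_0$ and $(z_4,\dots,z_{10})=(u_0^6,\dots,u_1^6)v_1$ on $\p^{10}$. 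Then the vertex $p$ is the section $v_0=v_1=0$, a ruling plane $P_{[u_0:u_1]}$ is a fibre, and a general hyperplane section $S'$ meets it in the line $m_{[u_0:u_1]}=\{c_0w+\phi(u)v_0+\chi(u)v_1=0\}$, with $\phi,\chi$ of degrees $2,6$. The variable line of the Claim is $L_1=\lim_{[u_0:u_1]\to t_1}\psi(m_{[u_0:u_1]})$, and the goal is to show $q\in L_1$.

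The key step is to pin down the four linear forms $H_1,\dots,H_4$ defining $\psi$. Placing $P_1,P_2$ over $t_1=[0:1]$ and $t_2=[1:0]$ (so that $\alpha_1,\alpha_2$ are $\{x_1=0\},\{x_2=0\}$), I note that since $P_1$ contains the line $\ell_1$ of the centre $C:=\langle\ell_1,\ell_2,p_3,p_4,p_5\rangle$ it is contracted, and as $p\in P_1$ the normalisation $q=[0:0:0:1]$ forces $x_1,x_2,x_3$ to vanish identically on $P_1$; the same holds on $P_2$. A dimension count ($\langle C,P_1,P_2\rangle$ is a $\p^7$, so the hyperplanes through it form a net) shows that $x_1,x_2,x_3$ are exactly the three forms vanishing on $\langle P_1,P_2,p_3,p_4,p_5\rangle$. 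Since $P_1=\langle e_0,e_3,e_{10}\rangle$ and $P_2=\langle e_0,e_1,e_4\rangle$ (coordinate points), these forms use only the monomials $z_2,z_5,\dots,z_9$, all divisible by $u_0u_1$; imposing $[x_1:x_2]=[u_0:u_1]$ together with vanishing at $p_3,p_4,p_5$ then yields
\[
x_1=u_0^2u_1\,\xi(u)\,v_1,\qquad x_2=u_0u_1^2\,\xi(u)\,v_1,\qquad x_3=u_0u_1\big(a\,v_0+Q(u)\,v_1\big),
\]
where $\xi$ is the cubic vanishing at $t_3,t_4,t_5$, $Q$ is a quartic, and $x_4$ involves $w$.

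With this in hand the Claim drops out. Restricting to $m_{[u_0:u_1]}$ and setting $u_0=\varepsilon\to0$, one has $x_1=O(\varepsilon^2)$ while $x_2,x_3=O(\varepsilon)$ and $x_4=O(1)$; parametrising $m$ by $[v_0:v_1]$ and rescaling the affine chart $x_4=1$ centred at $q$, the image line $L_{[u_0:u_1]}$ flattens into $\{x_1=0\}=\alpha_1$ and collapses toward $q$ at rate $\varepsilon$, so its limit $L_1$ is a genuine line through $q$. The extra factor of $u_0$ carried by $x_1$ but not by $x_3$ is exactly what pushes $L_1$ through $q$; by the symmetric factor $u_1$ in $x_2$ the same conclusion holds for $\alpha_2$. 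Equivalently, this shows that the coefficients of $x_2^5x_4$ and $x_1^5x_4$ in the degree--$6$ equation of a general $S\in\mathcal S$ vanish, i.e. $\alpha_1,\alpha_2$ are components of the tangent cone of $S$ at $q$.

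The main obstacle is precisely the determination of $x_1,x_2$ with their asymmetric vanishing orders $u_0^2u_1$ and $u_0u_1^2$. This rests on the double vanishing of $x_1,x_2,x_3$ on \emph{both} contracted planes $P_1,P_2$, which is the geometric heart of the matter; once this is established, the identification $\xi=Q_1/u_0=Q_2/u_1$ and the final limit computation are routine.
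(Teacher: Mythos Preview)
Your approach is correct but genuinely different from the paper's. The paper argues synthetically: blowing up $\ell_1$ in $X$ yields an exceptional divisor $\mathcal E\cong\mathbb F_1$; the strict transform of $P_1$ contracts to $q$ while $\mathcal E$ maps onto $\alpha_1$, and a general hyperplane section $S$ undergoes an elementary transformation based at $x=S\cap\ell_1$, so that the new ruling in $\alpha_1$ (the image of the exceptional curve over $x$, which lies on $\mathcal E$ and meets its $(-1)$--section) passes through $q$ because that $(-1)$--section is contracted to $q$. You instead pin down $\psi$ in explicit coordinates on $\widetilde X$ and take a limit; the crucial step---that $x_1,x_2$ are forced to be pure $v_1$--terms of the form $u_0u_1\,\xi(u)\,v_1$ times an extra $u_0$, resp.\ $u_1$---is correct and follows, as you say, from $[x_1:x_2]=[u_0:u_1]$ together with the vanishing on $P_1,P_2$ and at the $p_i$. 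Your limit argument is sound in substance, though the phrase ``collapses toward $q$ at rate $\varepsilon$'' should be made precise: for instance, the point of $\psi(m_{[\varepsilon:1]})$ with $x_3=0$ visibly tends to $q$, while its point with $x_4=0$ tends to a point of $\alpha_1$ distinct from $q$, so the limit in the Grassmannian is a genuine line in $\alpha_1$ through $q$. The paper's blow--up argument is coordinate--free and makes transparent \emph{why} $q$ appears (via the elementary transformation of the scroll $S$); your route is heavier to set up but actually anticipates what the paper does next, since the explicit forms of $x_1,x_2,x_3$ you obtain are precisely what underlie the later equation \eqref{eq:par} for $\mathcal S$, which the paper reaches only after Claims~\ref{claim:one}--\ref{claim:three} and a dimension count.
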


\begin{proof}[Proof of the Claim \ref {claim:one}]  We argue for the plane $\alpha_1$, the argument is the same for $\alpha_2$. Let us see what the projection $\psi: X\dasharrow \p^3$ does to the plane $P_1$. First of all we have to blow--up the line $\ell_1$ in $X$. This produces a threefold $X'\longrightarrow X$, with an exceptional divisor $\mathcal E$ over $\ell_1$, that is isomorphic to $\mathbb F_1$ because the normal bundle of $\ell_1$ in $X$ is clearly isomorphic to $\mathcal O_{\p^1}\oplus \mathcal O_{\p^1}(1)$. The projection $\psi$ induces a birational map $\psi': X'\dasharrow \p^3$. 
Via $\psi'$ the strict transform on $X'$ of the plane $P_1$ is contracted to a point, and precisely to the point   $q$, whereas the image of $\mathcal E$ is the plane $\alpha_1$ (via $\psi'$  the $(-1)$--section of $\mathcal E$, that is the intersection of $\mathcal E$ with the strict  transform on $X'$ of the plane $P_1$, gets contracted to the point $q$). Let $S$ be a general hyperplane section of $X$, let $s$ be the intersection line of $S$ with $P_1$, and let $x$ be the intersection of $s$ with $\ell_1$. Via $\psi'$  the surface  $S$ undergoes an elementary transformation based at $x$. In this elementary transformation $s$ is contracted to the point $q$ and the exceptional divisor of the blow--up of $S$ at $X$, that sits on $\mathcal E$, is mapped to the ruling of the surface $\Sigma$ image of $S$ via $\psi'$, contained in $\alpha_1$ and it passes through $q$, as wanted. \end{proof}

\begin{claim}\label{claim:two} The surfaces in $\mathcal S$ intersect the planes $\alpha_3,\alpha_4,\alpha_5$ in the line $r$ with multiplicity 6. 
\end{claim}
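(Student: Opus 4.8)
The plan is to show that, for $i=3,4,5$, the intersection $\Sigma\cap\alpha_i$ of a general surface $\Sigma\in\mathcal S$ (the image of a general hyperplane section $S$ of $X$) with the plane $\alpha_i$ is supported on $r$; since $\Sigma$ has degree $6$, this forces $\Sigma\cap\alpha_i=6r$. Because $\Sigma$ has multiplicity $5$ along $r$ and $\alpha_i\supset r$, one already knows that $\Sigma\cap\alpha_i=5r+m_i$ for a residual line $m_i$, so the whole content of the claim is the identity $m_i=r$.

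The decisive point, which I would isolate as a preliminary step, is that $\psi$ contracts each $P_i$ (for $i=3,4,5$) precisely onto $r$. That $P_i$ is contracted to a line is immediate: the centre $\Lambda:=\langle \ell_1,\ell_2,p_3,p_4,p_5\rangle$ meets the plane $P_i$ only in the single general point $p_i$, so $\psi|_{P_i}$ is the internal projection of the plane $P_i$ from $p_i$, whose image $g_i:=\psi(P_i)$ is a line of $\alpha_i$ through $q=\psi(p)$. This is exactly where $i=3,4,5$ parts company with $i=1,2$: in Claim \ref{claim:one} the centre contains a whole line $\ell_i\subset P_i$, so $P_i$ collapses all the way to the point $q$. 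To see that $g_i$ is not merely some line through $q$ but is $r$ itself, I would introduce the surface $R:=\overline{\psi^{-1}(r)}$. For general $s$ the map $\psi$ restricts to an isomorphism $P_s\xrightarrow{\sim}\alpha_s$, and, $r$ being contained in every $\alpha_s$, the preimages $\rho_s:=(\psi|_{P_s})^{-1}(r)$ sweep out $R$; its complementary ruling consists of the curves $C_y:=\overline{\psi^{-1}(y)}$, $y\in r$, each contracted by $\psi$ to the point $y$ and meeting every fibre of $X\to\p^1$ exactly once. In particular $C_y$ meets $P_i$ in a single point $x_i(y)$; because the centre on $P_i$ is the \emph{one} point $p_i$, for general $y$ this $x_i(y)$ avoids the indeterminacy locus $\Lambda\cap X$, so continuity gives $\psi(x_i(y))=y$. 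Hence $y\in\psi(P_i)=g_i$ for all general $y\in r$, whence $r\subseteq g_i$ and, both being lines, $g_i=r$.

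Granting $g_i=r$, the conclusion follows by bookkeeping on a resolution. Blowing up the point $p_i$ produces an exceptional $\p^2$ that dominates $\alpha_i$, while the strict transform of $P_i$ and the surface $R$ are contracted by $\psi$ into $r$. A general hyperplane section $S$ avoids $p_i$, hence misses this exceptional divisor, so $S$ meets the preimage of $\alpha_i$ only along $S\cap P_i$ and along $S\cap R$; since $\psi(S\cap P_i)\subseteq g_i=r$ and $\psi(S\cap R)\subseteq\psi(R)=r$, the curve $\Sigma\cap\alpha_i$ is set-theoretically $r$, and by degree $\Sigma\cap\alpha_i=6r$. The same argument applies verbatim to $\alpha_4$ and $\alpha_5$. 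I expect the main obstacle to be exactly the middle step, namely proving $g_i=r$ rather than just a line through $q$, and in particular making rigorous the continuity assertion $\psi(x_i(y))=y$, i.e. controlling the indeterminacy of $\psi$ along $R$; a safe alternative, closer to Fano's footnote, is to write down the four linear forms cutting out $\psi$ from the parametrization $[u_0:u_1:su_1:s^2u_1:u_2:su_2:\cdots:s^6u_2]$ of $X$ and to check directly that the two forms defining $r$ vanish identically on $P_i$.
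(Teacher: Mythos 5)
Your proof has the same skeleton as the paper's: everything reduces to showing that $\psi$ maps $P_i$ ($i=3,4,5$) onto the line $r$ itself, after which $\Sigma\cap\alpha_i$ is supported on $r$ and equals $6r$ for degree reasons (the paper phrases this as $5r$ plus the residual line $\psi(S\cap P_i)=r$, which is the same bookkeeping). The paper, however, simply asserts that the strict transform of $P_i$ is mapped ``precisely to the line $r$''; your attempt to actually prove this is where you add content, and also where the one soft spot sits. From ``the centre meets $P_i$ only at the single point $p_i$'' it does not formally follow that $x_i(y)=C_y\cap P_i$ avoids $p_i$ for general $y$: a one--parameter family of curves on the surface $R$ could in principle all pass through the one indeterminacy point $p_i$ of $\psi|_{P_i}$, and then the continuity argument $\psi(x_i(y))=y$ collapses. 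You flag exactly this, so the issue is one of completeness rather than of a wrong idea.

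The gap is real but closes cleanly, and your ``safe alternative'' is the right way to do it. Writing $\widetilde X=\p(\O_{\p^1}\oplus\O_{\p^1}(2)\oplus\O_{\p^1}(6))$ with fibre coordinates $u_0,u_1,u_2$ on the three summands and base coordinate $s$, a hyperplane of $\p^{10}$ cuts $X$ in $a u_0+b(s)u_1+c(s)u_2=0$ with $\deg b\le 2$, $\deg c\le 6$. If such a form vanishes on $\Lambda$ and at the vertex $p$, then it contains the whole planes $P_1=\langle p,\ell_1\rangle$ and $P_2=\langle p,\ell_2\rangle$, and a short elimination shows that the forms in the $4$--dimensional system of $\psi$ that vanish on an entire fibre $P_{s_*}$ are exactly the multiples of $(s-s_1)(s-s_2)\xi_0(s)(s-s_*)u_2$, where $\xi_0(s)=\prod_{i=3}^5(s-s_i)$; with the paper's normalization one may take $x_1=(s-s_1)^2(s-s_2)\xi_0(s)u_2$ and $x_2=(s-s_1)(s-s_2)^2\xi_0(s)u_2$. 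The factor $\xi_0$ makes both forms vanish identically on $P_3,P_4,P_5$, which is precisely $g_i=r$. (Incidentally this computation shows $R=\{u_2=0\}$ is the quadric cone over the conic $E$, so $R\cap P_i$ is a line of $P_i$ missing the general point $p_i$ and your continuity argument is in fact valid; but one only learns this from the explicit forms, so the coordinate route should be the main argument, not the fallback.) One last quibble on the final step: the preimage of $\alpha_i$ also contains $S\cap P_j$ for $j\ne i$, not only $S\cap P_i$ and $S\cap R$; this is harmless since those pieces are also carried into $r$ (or to the point $q\in r$ for $j=1,2$), but it should be said.
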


\begin{proof} [Proof of the Claim \ref {claim:two}] We argue for the plane $\alpha_3$, the argument is the same for $\alpha_4$ and $\alpha_5$. Again, let us see what the projection $\psi: X\dasharrow \p^3$ does to the plane $P_3$. First of all we have to blow--up the point $p_3$ in $X$. This produces a threefold $X'\longrightarrow X$, with an exceptional divisor $\mathcal E\cong \p^2$ over $p_3$. The projection $\psi$ induces a birational map $\psi': X'\dasharrow \p^3$. 
Via $\psi'$ the strict transform on $X'$ of the plane $P_1$ is mapped to a line, and precisely to the line $r$, whereas the image of $\mathcal E$ is the plane $\alpha_3$. Let $S$ be a general hyperplane section of $X$, let $s$ be the intersection line of $S$ with $P_3$. Via $\psi'$  the line $s$ is clearly mapped to $r$, and the assertion follows. \end{proof}

Let us now consider the linear system $\mathcal S'$ of surfaces of degree 6 in $\p^3$ with the line $r$ with multiplicity 5, intersecting the planes $\alpha_1,\alpha_2$, off $r$, in lines passing through the point $q$ and intersecting the planes $\alpha_3,\alpha_4,\alpha_5$ in the line $r$ with multiplicity 6. Of course $\mathcal S\subseteq \mathcal S'$. 

\begin{claim}\label{claim:three} $\mathcal S=\mathcal S'$. 
\end{claim}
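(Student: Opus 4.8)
The plan is to prove the reverse inclusion by a dimension count. Since we have already established $\mathcal S \subseteq \mathcal S'$ and we know that $\dim(\mathcal S) = 10$ (being the image of the $10$--dimensional hyperplane system $|\mathcal O_X(1)|$ under the birational map $\psi$), it suffices to show that $\dim(\mathcal S') \leq 10$; equality will then force $\mathcal S = \mathcal S'$.

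To compute $\dim(\mathcal S')$ I would first parametrize the forms $F$ of degree $6$ on $\p^3$ having multiplicity at least $5$ along $r$. Expanding $F$ in powers of $x_1,x_2$ with coefficients in $x_3,x_4$, multiplicity $5$ along $r\colon x_1=x_2=0$ means that only the terms of $(x_1,x_2)$--degree $5$ and $6$ survive, so
\[
F = \sum_{i+j=5} x_1^i x_2^j\,(a_{ij}x_3 + b_{ij}x_4) + \sum_{i+j=6} c_{ij}\,x_1^i x_2^j,
\]
an affine space of dimension $12+7=19$. Next I would impose the remaining conditions plane by plane. For each of $\alpha_3,\alpha_4,\alpha_5$, written as $x_1=t_k x_2$, the restriction $F|_{\alpha_k}$ equals $x_2^5\bigl(\sum_{i+j=5} t_k^i(a_{ij}x_3+b_{ij}x_4)\bigr) + c_k\, x_2^6$ for a constant $c_k$; demanding that the intersection be $6r$ forces the coefficient of $x_2^5$ to vanish, i.e. $\sum_i t_k^i a_{i,5-i}=0$ and $\sum_i t_k^i b_{i,5-i}=0$. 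For $\alpha_1\colon x_1=0$ and $\alpha_2\colon x_2=0$ the restriction factors as $x_2^5(a_{0,5}x_3+b_{0,5}x_4+c_{0,6}x_2)$ and $x_1^5(a_{5,0}x_3+b_{5,0}x_4+c_{6,0}x_1)$ respectively, the residual linear factors being the lines cut off $r$; imposing that these pass through $q=[0,0,0,1]$ gives the single conditions $b_{0,5}=0$ and $b_{5,0}=0$.

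Finally I would verify that, for general choices, these conditions are independent and so cut the dimension down exactly. The three conditions on the $a_{i,5-i}$ coming from $\alpha_3,\alpha_4,\alpha_5$ are independent because $t_3,t_4,t_5$ are distinct (Vandermonde), leaving a $3$--dimensional space. For the $b_{i,5-i}$, setting $B(t)=\sum_i b_{i,5-i}\,t^i$, the five conditions $B(t_3)=B(t_4)=B(t_5)=0$, $b_{0,5}=0$, $b_{5,0}=0$ say that $B$ has degree $\leq 4$ and vanishes at $0,t_3,t_4,t_5$, whence $B=\lambda\,t(t-t_3)(t-t_4)(t-t_5)$, a $1$--dimensional space. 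The seven coefficients $c_{ij}$ remain free. Thus $\mathcal S'$ has affine dimension $3+1+7=11$, that is $\dim(\mathcal S')=10$, as desired. The main point requiring care is exactly this independence/genericity bookkeeping: translating the three geometric incidence conditions into genuine linear conditions on the coefficients and checking, via the Vandermonde and factorization arguments, that none of them is redundant; once that is in hand, the conclusion $\mathcal S=\mathcal S'$ is immediate.
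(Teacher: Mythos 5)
Your proof is correct, but it bounds $\dim(\mathcal S')$ by a different mechanism than the paper. The paper's argument is a residual/splitting-off count: given a surface in $\mathcal S'$, imposing that it contain each of $\alpha_3,\alpha_4,\alpha_5$ costs at most one further linear condition (the intersection is already $6r$, so only the scalar multiple of $x_2^6$ must be killed), and containing each of $\alpha_1,\alpha_2$ costs at most two (the residual line through $q$ varies in a pencil plus the $6r$ degeneration); after splitting off all five planes the residue is an arbitrary plane, giving $\dim(\mathcal S')\leq 7+3=10$. This needs no independence check, since ``at most $k$ conditions'' is automatic for linear functionals. You instead write down the $19$--dimensional space of sextic forms with multiplicity $5$ along $r$, translate the defining conditions of $\mathcal S'$ into $8$ explicit linear equations on the coefficients, and verify their independence via the Vandermonde and factorization arguments --- the step your argument genuinely requires, and which you handle correctly (noting that one also needs $t_3,t_4,t_5$ nonzero and distinct, which the generality of $P_3,P_4,P_5$ guarantees). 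What your route buys is more than the claim: your conclusion that the $x_4$--part of the quintic piece is $\lambda\, t\prod_k(t-t_k)$, the $x_3$--part ranges over $\xi\cdot(\text{quadratics})$, and the sextic piece is free is precisely the explicit equation \eqref{eq:par} for $\mathcal S$ that the paper derives separately in the paragraph following the claim; so your computation proves the claim and that subsequent identification in one stroke, at the cost of being less slick than the component-splitting count.
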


\begin{proof} [Proof of the Claim \ref {claim:three}] One has $\dim(\mathcal S)=10$. So it suffices to prove that $\dim(\mathcal S')\leq 10$. To see this, note that we can split off $\mathcal S'$ the planes $\alpha_3,\alpha_4,\alpha_5$ with at most one condition each, and the planes $\alpha_1,\alpha_2$ with at most two conditions each, for a total of 7 conditions at most, and the residual system is the linear system of all planes in $\p^3$, that has dimension 3. The assertion follows. 
\end{proof}

Now we can give an explicit equation for the linear system $\mathcal S$. Indeed $\mathcal S$ has equation of the form
\begin{equation}\label{eq:par}
ax_1x_2x_4\xi(x_1,x_2)+x_3\xi(x_1,x_2)\varphi_2(x_1,x_2)+\varphi_6(x_1,x_2)=0
\end{equation}
where $a$ is a complex parameter and $\varphi_2(x_1,x_2), \varphi_6(x_1,x_2)$ are arbitrary homogeneous polynomials in $x_1,x_2$ of degree equal to the index. Indeed, the equation \eqref {eq:par} depends linearly on 11 independent homogeneous parameters, i.e., $a$ and the coefficients of $\varphi_2(x_1,x_2)$ and $ \varphi_6(x_1,x_2)$, hence it is the equation of a linear system of dimension 10 of surfaces of degree 6. Let us intersect with a plane with equation $x_2=tx_1$, with $t\in \mathbb C$. The resulting system is 
$$
x_2=tx_1, \quad x_1^5(ax_4t\xi(1,t)+x_3\xi(1,t)\varphi_2(1,t)+x_1\varphi_6(1,t))=0.
$$
The factor $x_1^5$ stays for the line $r$ which splits off from the intersection with multiplicity 5. The further intersection has equation 
$$
x_2=tx_1, \quad ax_4t\xi(1,t)+x_3\xi(1,t)\varphi_2(1,t)+x_1\varphi_6(1,t)=0.
$$
If we set $t=0$ (i.e., if we intersect with the plane $x_2=0$, namely $\alpha_2$) we get the equations
$$
x_2=0, \quad x_3\xi(1,0)\varphi_2(1,0)+x_1\varphi_6(1,0)=0
$$
that is the equation of a line through $q$. The same if we intersect with the plane $\alpha_1$. If we intersect with either one of the planes $\alpha_3,\alpha_4,\alpha_5$, this means that we choose a value $t=\tau$ such that $\xi(1,\tau)=0$. Hence the resulting equations are
$$
x_2=\tau x_1, \quad x_1\varphi_6(1,\tau)=0 \Longleftrightarrow x_1=x_2=0,
$$
namely we get the line $r$ once more. This shows that \eqref {eq:par} is just the equation of the linear system $\mathcal S$. 

The linear system $\mathcal S$ determines a map $\phi_\mathcal S: \p^3\dasharrow X\subset \p^{10}$, and we can consider the composite map $\phi_\mathcal L\circ \phi_\mathcal S: \p^3\dasharrow Z\subset \p^{38}$, that in turn corresponds to a linear system $\mathcal T$ of dimension 38 of surfaces in $\p^3$. Let us identify the system $\mathcal T$. 

Remember that $\mathcal L$ is cut out  on $X$ by the cubic hypersurfaces containing 6 planes of the ruling of $X$. This implies that $\mathcal T$ has to be a linear system of surfaces of degree 12, that pass through the line $r$ with multiplicity $9$. Specifically, we note that the (reducible) surface with equation
$$
x_1^2x_2^2x_4^2\xi(x_1,x_2)^2=0
$$
belongs to $\mathcal T$. Indeed, by \eqref {eq:par}, the surface
$$
x_1x_2x_4\xi(x_1,x_2)\cdot x_1x_2x_4\xi(x_1,x_2)\cdot \varphi_6(x_1,x_2)=0
$$
is the image on $\p^3$ of a surface cut out on $X$ by a cubic hypersurface. Dividing by $\varphi_6(x_1,x_2)$ we get a surface in $\mathcal T$. 

Arguing in a similar manner, we see that the surfaces with the following equations are in $\mathcal T$
$$
\begin{aligned}
&x_1x_2x_3x_4\xi(x_1,x_2)^2f_2(x_1,x_2)=0, \,x_1x_2x_4\xi(x_1,x_2)f_6(x_1,x_2)=0,\, x_3^3\xi(x_1,x_2)^3=0, \cr
&x_3^2\xi(x_1,x_2)^2f_4(x_1,x_2)=0, \quad x_3\xi(x_1,x_2)f_8(x_1,x_2)=0, \quad f_{12}(x_1,x_2)=0,
\end{aligned}
$$
where $f_i(x_1,x_2)$ are homogeneous polynomials of degree $i$ in $x_1,x_2$, for $i\in \{2,4,6,8,\\
12\}$. Hence the following is the equation of a sublinear system of $\mathcal T$
\begin{equation}\label{eq:wort}
\begin{aligned}
&ax_1^2x_2^2x_4^2\xi^2(x_1,x_2)+x_1x_2x_4\xi(x_1,x_2)(x_3\xi(x_1,x_2)f_2(x_1,x_2)+f_6(x_1,x_2))+\cr
&+bx_3^3\xi^3(x_1,x_2)+x_3^2\xi^2(x_1,x_2)f_4(x_1,x_2)+x_3\xi(x_1,x_2)f_8(x_1,x_2)+f_{12}(x_1,x_2)=0
\end{aligned}
\end{equation}
where $a,b$ are complex parameters and $f_i(x_1,x_2)$, with $i\in \{2,4,6,8,12\}$, are arbitrary homogeneous polynomials of degree equal to the index, in $x_1,x_2$. The polynomial in \eqref {eq:wort} depends linearly on 39 independent homogeneous parameters, hence it defines a linear system of dimension 38 and since $\dim(\mathcal T)=38$, the equation \eqref {eq:wort} defines just the linear system $\mathcal T$.

Finally we are in position to finish our proof. Indeed, consider  the following birational morphism
$$
\begin{aligned}
&\eta: [x_1,x_2,x_3,x_4]\in \p^3\mapsto \cr
&\mapsto [y_1,y_2,y_3,y_4]=[x_1,x_2, x_3\xi(x_1,x_2), x_1x_2x_4\xi(x_1,x_2)]\in \p(1,1,4,6).
\end{aligned}
$$

As we saw in \S \ref {sec:FC}, the anticanonical system on $\p=\p(1,1,4,6)$ is $\mathcal O(12)$, hence the anticanonical morphism $\phi_{|-K_\p|}: \p\longrightarrow \p^{38}$ is determined by the linear system of surfaces of $\p(1,1,4,6)$ with equation
$$
a y_4^2+ y_3y_4f_2(x_1,x_2)+y_4f_6(x_1,x_2)+b y_3^3+y_3^2f_4(x_1,x_2)+y_3f_8(x_1,x_2)+f_{12}(x_1,x_2)=0
$$ 
with $a,b$ and the $f_i$'s as above. Then it is clear from the equation \eqref {eq:wort} of $\mathcal T$ that $\phi_{\mathcal L}= \phi_{|-K_\p|} \circ \eta$, and from this the assertion follows. 
\end{proof}

\end{document}